\documentclass[a4paper,11pt,reqno]{amsart}
\usepackage{amsmath,amsfonts,amssymb,amsthm,graphicx,color}

%DIMENSIONI della pagina
\voffset=-1.5cm \textheight=23cm \hoffset=-.5cm \textwidth=16cm
\oddsidemargin=1cm \evensidemargin=-.1cm
\footskip=35pt \linespread{1.25}
\parindent=20pt

\numberwithin{equation}{section}

%DEFINIZIONE LETTERE

\def\eps{\varepsilon}

\def\H{\mathcal{H}}
\def\la{\lambda}
\def\lb{\lambda}

\def\N{\mathbb{N}}

\def\R{\mathbb{R}}

\def\Om{\Omega}

%DEFINIZIONE ABBREVIAZIONI

\def\bal{\begin{aligned}}
\def\eal{\end{aligned}}

%% Package ``Teoremi fondamentali'' --- INIZIO %%
\newcounter{mt}

\def\maintheoremdeclaration#1{\stepcounter{mt}\newcounter{#1}\setcounter{#1}{\arabic{mt}}}
%% Package ``Teoremi fondamentali'' --- FINE %%

%% Elenco ``Teoremi fondamentali'' di questo articolo
\maintheoremdeclaration{mainintro}
\maintheoremdeclaration{main}
\maintheoremdeclaration{bounded}

%%

% DEFINIZIONE TEOREMI

\newtheorem{theorem}{Theorem}[section]
\newtheorem{lemma}[theorem]{Lemma}

\newtheorem{remark}[theorem]{Remark}
\newtheorem{corollary}[theorem]{Corollary}

\def\diam{\mathrm{diam}\,}

\def\eps{\varepsilon}

\def\p{\mathrm{Per}}

\title{A surgery result for the spectrum of the Dirichlet Laplacian}
%\author{Dorin Bucur and Dario Mazzoleni}
\author{Dorin Bucur}\address{Institut Universitaire de France and Laboratoire de Math\'ematiques (LAMA) UMR CNRS 5127, Universit\'e de Savoie, Campus Scientifique, 73376 Le-Bourget-Du-Lac - FRANCE}\email{dorin.bucur@univ-savoie.fr}
\author{Dario Mazzoleni}\address{Dipartimento di Matematica, Universit\`a degli Studi di Pavia, via Ferrata, 1, 27100 Pavia - ITALY}
\email{dario.mazzoleni@unipv.it}

\begin{document}
\begin{abstract}
In this paper we give a method to geometrically modify an open set such that the first $k$ eigenvalues of the Dirichlet Laplacian and its perimeter are not increasing, its measure remains constant, and both perimeter and diameter decrease below a certain threshold. The key point of the analysis relies on the properties of the shape subsolutions for the torsion energy.
\end{abstract}
\maketitle
\textbf{Keywords:} shape optimization, eigenvalues, Dirichlet Laplacian

\section{Introduction and statement of the main results}
The results of this paper are motivated by spectral shape optimization problems 
 for the eigenvalues of the Dirichlet Laplacian, e.g.
\begin{equation}\label{0}
\min{\left\{\la_k(\Om),\;\Om\subset\R^N,\;|\Om|=1\right\}},
\end{equation}
where $\la_k$ denotes the $k^{th}$ eigenvalue of the Dirichlet Laplacian and $|\cdot|$ the $N$ dimensional Lebesgue measure ($N\ge 2$).
In order to prove existence of an optimal set $\Om$ for problem~\eqref{0}, two different methods were proposed recently. On the one hand, in \cite{mp} it is proved a surgery result asserting that one can suitably modify an open set such that the first $k$ eigenvalues of the Dirichlet Laplacian are not increasing, its measure remains constant and its diameter decrease below a certain threshold. This result together to the Buttazzo-Dal Maso existence theorem~\cite{BM} (which has a local character) gives a proof of global existence of solutions. By a different method, based on the so called shape subsolutions (see the definition in Section 2), in  \cite{blak} is
proved the existence of solutions and moreover that all minimizers have finite diameter  and finite perimeter. 

Recently,   Van den Berg has studied  in~\cite{vb} a minimum problem with both a measure \emph{and} a perimeter constraint:
\begin{equation}\label{1}
\min{\left\{\la_k(\Om),\;\Om\subset\R^N,\;|\Om|\le1,\;\p(\Om)\leq C\right\}}.
\end{equation}
An existence result for this problem cannot be deduced from the results~\cite{blak, mp}. The surgery method of \cite{mp} can hardly control  the perimeter since the procedure generates new pieces of boundary which may have a large surface area. As well, in the presence of two simultaneous constraints, the notion of  shape subsolution can not be used in a direct manner due to the lack of suitable Lagrange multipliers which can take into account {\it both } geometric constraints. The results of this paper are also intended to provide a tool allowing to prove existence of a solution for \eqref{1}.

In this paper we  give a result which follows the main objectives of  \cite{mp}, but with the new requirement on the control of  the perimeter. For this purpose, the ``surgery" is done in a different manner, using some of the key ideas of the shape subsolutions. Roughly speaking, we look at the local behavior of the torsion function and prove that if this function is small enough in some region, then one can cut out a piece of the domain controlling simultaneously the variation of the low part of the spectrum, of the measure and of the perimeter.

Throughout the paper, by $\tilde \Om$ we denote an open set of finite measure. For simplicity, and without restricting the generality, we shall assume that its measure is equal to $1$. 
Here is our main result which, for clarity, is stated in a simplified way: 

\begin{theorem}\label{mainintro}
For every $K>0$, there exists $D,C>0$ depending only on $K$ and the dimension $
N$, such that for every open set $\tilde \Om\subset \R^N$ with $|\tilde \Om|=1$ there exists an open set $\Om$ satisfying 
\begin{enumerate}
\item $|\Om|=1$, $\diam(\Om)\leq D$ and  $\p(\Om)\leq \min \{\p(\tilde \Om), C\}$,
\item if  $\la_k(\tilde \Om)\leq K$, then  $\la_k(\Om)\leq \la_k(\tilde \Om)$.
\end{enumerate}
\end{theorem}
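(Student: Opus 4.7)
My plan is to follow the surgery philosophy of \cite{mp} but to replace the rigid geometric cut-and-paste by a cut guided by the torsion function $w = w_{\tilde\Om}$, so that eigenvalues, measure, and perimeter can all be controlled simultaneously. Since $|\tilde\Om|=1$, Talenti's comparison with a ball yields dimensional bounds $\|w\|_\infty \le T_N$ and $\int_{\tilde\Om} w\,dx \le T'_N$. If $\lambda_k(\tilde\Om)\le K$, Moser iteration applied to the $L^2$-normalized eigenfunctions $u_j$ ($j\le k$) gives $\|u_j\|_\infty \le M(N,K)$, and because $-\Delta(u_j^2)\le 2\lambda_j u_j^2 \le 2KM^2$, the maximum principle yields the pointwise comparison $u_j^2 \le C(N,K)\,w$ on $\tilde\Om$. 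Thus wherever $w$ is small, each of the first $k$ eigenfunctions is also small: this is the main bridge between the surgery and the spectrum.

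\emph{Choosing the cutting ball.} Assume $\diam(\tilde\Om) > D_0(N,K)$ or $\per(\tilde\Om) > C_0(N,K)$, with thresholds to be fixed. Using $\int w \le T'_N$ together with a bounded-overlap covering of $\R^N$ by balls of a fixed radius $\rho$, a pigeonhole argument produces a ball $B = B_\rho(x_0)$ with $\int_B w \le \eta$, where $\eta = \eta(N,K)$ is a small parameter to be chosen. In the perimeter case a Vitali argument on the perimeter measure allows us to require in addition $\mathcal{H}^{N-1}(\partial^*\tilde\Om \cap B) \ge \kappa(N,\rho) > 0$, so that the cut will remove a definite amount of outer boundary. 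In the diameter case, $B$ is located so that $\tilde\Om \cap B$ is an outlying piece at distance $\ge \rho$ from the bulk, and the smallness of $w$ on $B$ combined with the relative isoperimetric inequality for the ensuing ``thin'' region yields a lower bound on $\mathcal{H}^{N-1}(\partial^*\tilde\Om \cap B)$ exceeding the area of the forthcoming cut. A Caccioppoli estimate on the torsion equation upgrades $\int_B w \le \eta$ to $\int_{B'} |\nabla w|^2 \le C\eta$ on a concentric smaller ball $B'$; coarea combined with Cauchy--Schwarz then produces a level $t^* \in (0,\sqrt\eta)$ such that the cutting surface $S := \{w = t^*\} \cap B'$ has $\mathcal{H}^{N-1}(S)$ bounded by a dimensional constant.

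\emph{Surgery, eigenvalue estimate, rescaling, iteration.} Set $\Om^\sharp := \tilde\Om \setminus \overline{\{w < t^*\}\cap B'}$. The perimeter identity $\per(\Om^\sharp) = \per(\tilde\Om) - \mathcal{H}^{N-1}(\partial^*\tilde\Om \cap B' \cap \{w<t^*\}) + \mathcal{H}^{N-1}(S)$ combined with the lower bounds from the previous step yields a strict decrease $\per(\Om^\sharp) \le \per(\tilde\Om) - \kappa'(N,K)$ in both cases, provided $\eta$ is small enough. The cutoff $\phi := \min(w/t^*, 1)$ makes $\phi u_j \in H^1_0(\Om^\sharp)$ admissible, and using $u_j^2 \le C w$ one obtains $\|u_j - \phi u_j\|_{L^2}^2 \le C\int_{\{w<t^*\}} w \le C\eta$, while $|\nabla\phi|^2 u_j^2 \le C|\nabla w|^2/t^*$ combined with $\int_B |\nabla w|^2 \le C\eta$ gives a Dirichlet-form error of $C\sqrt\eta$; Courant--Fischer then yields $\lambda_k(\Om^\sharp) \le \lambda_k(\tilde\Om) + C\sqrt\eta$. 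Rescaling $\Om^\sharp$ by $|\Om^\sharp|^{-1/N} \ge 1$ restores unit measure, multiplies eigenvalues by $|\Om^\sharp|^{2/N} \le 1$ (which reabsorbs the $C\sqrt\eta$ perturbation whenever $\eta$ is small compared with the discarded measure times $K$), and perturbs perimeter and diameter by a factor close to $1$ that is dominated by the gains produced in the surgery. Iterating, at every step either the diameter or the perimeter strictly drops by a quantum depending only on $(N,K)$; after finitely many iterations both $\diam(\Om) \le D$ and $\per(\Om) \le C$ hold, proving the theorem.

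\emph{Main obstacle.} The delicate point is the eigenvalue perturbation. The cutoff $\phi = \min(w/t^*, 1)$ has $|\nabla \phi|$ large precisely where $w$ is small, and only the pointwise bound $u_j^2 \le Cw$ prevents this from giving an uncontrolled contribution to the Dirichlet form. One must quantitatively arrange that the error $C\sqrt\eta$ on $\lambda_k$ is strictly smaller than the gain from rescaling, which is of order $K$ times the discarded measure, at every iteration, and that the constants do not deteriorate through the iterative procedure. This quantitative balance, which ties together $\rho$, $\eta$, $t^*$, the measure loss and the perimeter/diameter gain, is the technical heart of the proof.
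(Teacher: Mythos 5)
Your scheme has several genuine gaps that the paper's argument is specifically designed to avoid. The most serious one is the diameter reduction. You cut inside a ball $B'$ of fixed radius $\rho$ and remove $\{w<t^*\}\cap B'$; but if $\tilde\Om$ is connected (e.g.\ a bulgy ball of volume $1/2$ attached to a long thin tube of volume $1/2$, which has $\lambda_1\le K$ and arbitrarily large diameter) there is no ``outlying piece at distance $\ge\rho$ from the bulk''. A single ball cut removes only $O(\rho^N)$ worth of measure and does not disconnect a long tube; you would have to iterate many times, and the number of iterations depends on $\diam(\tilde\Om)$, which is not controlled a priori. This is precisely why the paper cuts with \emph{strips} $S_r(x_1)=[x_1-r,x_1+r]\times\R^{N-1}$: a strip cut severs the domain across an entire hyperplane, after which the connected components whose projection lies outside $X$ can be replaced wholesale by a ball. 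A related gap is in your perimeter accounting: removing $\{w<t^*\}\cap B'$ creates new boundary not only along the level surface $\{w=t^*\}\cap B'$ but also along $\partial B'\cap\{w<t^*\}\cap\tilde\Om$, which you omit and which can have surface area comparable to $\partial B'$ — again a problem that vanishes when cutting with strips, since the only new boundary created is the flat piece $\tilde\Om\cap\partial S_r$, which Lemma \ref{perbound} controls by a De Giorgi sliding argument on $m(t)$. Finally, your closing ``Main obstacle'' paragraph acknowledges that the quantitative balance between the eigenvalue error $C\sqrt\eta$, the rescaling gain, and the iteration count is unresolved; in the paper this balance is absorbed once and for all by the single algebraic identity $\lambda_i(\Om)+\Lambda|\Om|^{2/N}\le\lambda_i(\tilde\Om)+\Lambda|\tilde\Om|^{2/N}$ of Lemma \ref{choicec}, with no iteration needed.

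On the positive side, your pointwise barrier $u_j^2\le C(N,K)\,w_{\tilde\Om}$, obtained from $-\Delta(u_j^2)\le 2K\|u_j\|_\infty^2$ and the maximum principle, is a sound and attractive alternative to the paper's route through the $\gamma$-distance estimate \eqref{bm02}: it makes the test functions $\phi u_j$ directly quantitative and localizes the spectral error to $\{w<t^*\}$. The paper instead bypasses eigenfunction estimates entirely by comparing torsion energies: Lemma \ref{choicec} shows that any $\Om\subset\tilde\Om$ with $E(\Om)+c|\Om|\le E(\tilde\Om)+c|\tilde\Om|$ automatically has $\lambda_i(\Om)|\Om|^{2/N}\le\lambda_i(\tilde\Om)|\tilde\Om|^{2/N}$ for $i\le k$, and Theorem \ref{bounded} then produces $\Om$ as a minimizer of $E+c|\cdot|$ among subsets (a shape subsolution, hence bounded with finite perimeter by \cite{blak}), while Theorem \ref{main} achieves the same energy inequality by an Alt--Caffarelli strip-removal (Lemma \ref{caffarelli}/Corollary \ref{cafcor}). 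If you want to salvage your ball-based idea, you would need to (i) replace balls by strips or otherwise guarantee that each cut disconnects and strictly shrinks $\diam_{e_1}$, (ii) include the $\partial B'$ contribution in the perimeter budget, and (iii) remove the iteration by working with a single combined functional such as $E+c|\cdot|$.
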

The set $\Om$ is essentially obtained by removing some parts of $\tilde \Om$ and rescaling it to satisfy the measure constraint. 
In case the measure of $\tilde \Om$ is not equal to $1$, the constants $D$ and $C$ above depend also on $|\tilde \Om|$, following the rescaling rules of the eigenvalues, measure and perimeter

We shall split the main result stated above in two distinct theorems, Thereoms \ref{bounded} and \ref{main}. The construction of $\Om$ differs depending on which kind of control of the perimeter is desired. If the perimeter of $\tilde \Om$ is infinite (or larger than $C$), it is convenient to use an optimization argument related to the shape subsolutions  to directly construct the set $\Om$ satisfying all the requirements above on eigenvalues, measure and diameter, but with a perimeter less than $C$ (Theorem \ref{bounded}). If the  perimeter of $\tilde \Om$ is finite (for example smaller than $C$), we produce a different argument, by cutting in a suitable way the set $\tilde \Om$ with hyperplanes, and removing some strips, decreasing in this way the perimeter (Thereom \ref{main}) and of course satisfying all the requirements above on eigenvalues, measure and diameter.  In this last case,  the control of the perimeter is done through a De Giorgi type argument. We point out that the assertions of the two theorems are slightly stronger than the unified formulation stated in Theorem \ref{mainintro}.

We note that the results of this paper hold true in exactly the same way if instead of ``open" sets one works with ``quasi-open" or  ``measurable" sets (see the precise definitions of the spectrum for this weaker settings in \cite{bmpv14}). In general, if $\tilde \Om$ is quasi-open or measurable, then the constructed set $\Om$ is of the same type. In some situations in which the diameter of $\tilde  \Om$ is  large, $\Om$ could be chosen open and smooth.

\section{The spectrum of the Dirichlet Laplacian and the torsion function}
Let $\Om \subset \R^N$ be an open set of finite measure. Denoting by $H^1_0(\Om)$ the usual Sobolev space, the eigenvalues of the Dirichlet Laplacian on $\Om$ are defined by
\begin{equation}
\lambda_k(\Om):=\min_{S_k}\max_{u\in S_k\setminus \{0\}}\frac{\int_\Om |\nabla u|^2\,dx}{\int _\Om u^2\,dx}\,,
\end{equation}
 where the minimum ranges over all $k$-dimensional subspaces $S_k$ of $H^1_0(\Om)$. 

The \emph{torsion} function of  $\Omega$ is the function denoted $w_\Om$  which minimizes the {\it torsion} energy
\[
E(\Om):=\min_{u\in H^1_0(\Om)}{\frac{1}{2}\int_{\R^N}{|Du|^2}dx-\int_{\R^N}{u}dx},
\]
and satisfies in a weak sense 
\[
-\Delta w_{\Om}=1\quad \mbox{in }\Om, \qquad w_\Om\in H^1_0(\Om).
\]
Note that the torsion energy is negative if $\Om\not=\emptyset$ and 
$$E(\Om) = -\frac12\int_{\R^N}{w_{\Om}}dx< 0.$$
We recall  (see for instance \cite{BM}) that if one extends the torsion function by zero on $\R^N \setminus \Om$, then it satisfies $-\Delta w_\Om\le 1$ in the sense of distributions in $\R^N$. 

A fundamental property of the torsion function is the Saint Venant inequality, which states that among all (open) sets of equal volume, the ball maximizes the $L^1$-norm of the torsion function. This leads to the following inequality
\begin{equation}\label{saintvenant}
\int_{\Om}{w_\Om}dx\leq |\Om|^{\frac{N+2}{N}}\frac{\omega_N^{-2/N}}{N(N+2)},
\end{equation}
where $\omega_N$ is the volume of the ball of \emph{radius} $1$ in $\R^N$.
A similar  inequality between the $L^\infty$ norms  was proved by Talenti in \cite{talenti} and leads to :
\begin{equation}\label{bdm03}
\|w_{\Om}\|_{L^\infty}\leq   \Big(\frac{|\Om|}{\omega_N}\Big ) ^\frac 2N \frac{1}{2N}.
\end{equation}

We recall the following bound on the ratio between eigenvalues of the Dirichlet Laplacian, which can be found in~\cite{A}. For all $k\in \N$ there exists a constant $M_k$, depending \emph{only} on $k$ and the dimension $N$, such that 
\begin{equation}\label{boundk}
1\le \frac{\la_k(\Om)}{\la_1(\Om)}\leq M_k.
\end{equation}
Another fundamental inequality, proved in~\cite{vb1} (see also \cite{bvb}), relates the $L^\infty$ norm of the torsion function with the first eigenvalue and reads
\begin{equation}\label{vdb}
\frac{1}{\la_1(\Om)}\leq \|w_{\Om}\|_{\infty}\leq \frac{4+3N\log 2}{\la_1(\Om)}.
\end{equation}
We also recall the following inequality due to Berezin, Li and Yau (see  \cite{ly83}), which asserts that for some constant $C_N$ depending only on the dimensions of the space, we have 
$$\forall k\in \N\;\;\; \lb_k(\Om)\ge C_N\Big ( \frac{k}{|\Om|}\Big ) ^\frac{2}{N}.$$
The way we shall use this inequality is the following: if one fixes $K >0$, then the number of eigenvalues of $\Om$ below $K$, is at most of $\Big (\frac{K}{C_N}\Big ) ^\frac N2 |\Om|$.

The $\gamma$-distance between two open sets with finite measure $\Om_1,\Om_2$ is defined by:\[
d_{\gamma}(\Om_1,\Om_2):=\int_{\R^N}{|w_{\Om_1}-w_{\Om_2}|}dx.
\]
For sets satisfying $\Om_1 \subseteq \Om_2$, the following inequality was proved in \cite{blak} : for every $k\in \N$
\begin{equation}\label{bm02}
\Big | \frac{1}{\lb_k(\Om_1)}- \frac{1}{\lb_k(\Om_2)}\Big | \le 2k^2e^{1/4\pi}\la_k(\Om_2)^{N/2} d_\gamma(\Om_1,\Om_2),
\end{equation}
and we notice that there is a strong relation between the $\gamma$-distance and the torsion energy:  $d_{\gamma}(\Om_1,\Om_2)=2(E(\Om_1)-E(\Om_2))$. 

Let $c>0$. It is said that $\tilde \Om\subset \R^N$ is a shape subsolution for the energy if for all $\Om\subset\tilde \Om$
 \[
E(\Om)+c|\Om|\geq E(\tilde \Om)+c|\tilde \Om|.
\]
It is proved in~\cite{blak} that, if $\tilde \Om$ is a  shape subsolution for the energy, then it is bounded (with controlled diameter)  and has finite perimeter.

We conclude this Section with a result relating a pointwise value of the torsion function to its integral on some neighborhood. 
\begin{lemma}\label{density}
Let $ \Om\subset\R^N$ be an open set and $w=w_{\Om}$ be its torsion function. For every $\theta>0$, 
there exists $\delta_0 >0$ depending only on $N,\theta$ such that if $w(x)\geq \theta$ for some $x\in\R^N$, then \[
\int_{B_\delta (x)}{w}dx\geq \frac{\theta\omega_N}{2}\delta^N,\qquad\forall \delta\in(0,\delta_0).
\]
\end{lemma}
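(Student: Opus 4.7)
The plan is to exploit the fact that the torsion function, extended by zero, is a distributional subsolution of $-\Delta w \le 1$ on all of $\R^N$. This allows us to turn $w$ into a genuinely subharmonic function by adding a quadratic correction and then to apply the classical mean value inequality.

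First I would set $v(y):= w(y)+\dfrac{|y-x|^2}{2N}$. Since the extended torsion function satisfies $-\Delta w\le 1$ in $\mathcal{D}'(\R^N)$ (a fact already recalled in the present section), we have $\Delta v\ge 0$ in $\mathcal{D}'(\R^N)$, so $v$ is subharmonic. Note moreover that $w\ge 0$ everywhere (by the maximum principle $w\ge 0$ on $\Om$ and $w=0$ outside), and that since $w(x)\ge \theta>0$ the point $x$ must lie in $\Om$, where $w$ is continuous, so that evaluation at $x$ is unambiguous.

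Next I would apply the mean value inequality for subharmonic functions at $x$ on the ball $B_\delta(x)$, namely
\[
v(x)\le \frac{1}{\omega_N\delta^N}\int_{B_\delta(x)} v(y)\,dy,
\]
which, using $v(x)=w(x)\ge\theta$, gives
\[
\theta\le \frac{1}{\omega_N\delta^N}\int_{B_\delta(x)} w(y)\,dy + \frac{1}{2N\omega_N\delta^N}\int_{B_\delta(x)} |y-x|^2\,dy.
\]
The second integral is computed in polar coordinates as $\int_{B_\delta(x)}|y-x|^2\,dy=\dfrac{N\omega_N\,\delta^{N+2}}{N+2}$, so the correction reduces to $\dfrac{\delta^2}{2(N+2)}$, yielding
\[
\int_{B_\delta(x)} w\,dy\ \ge\ \omega_N\delta^N\left(\theta-\frac{\delta^2}{2(N+2)}\right).
\]

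Finally I would choose $\delta_0=\sqrt{(N+2)\theta}$, so that for every $\delta\in(0,\delta_0)$ the correction $\delta^2/(2(N+2))$ is bounded by $\theta/2$, leaving $\int_{B_\delta(x)} w\,dy\ge (\theta\omega_N/2)\delta^N$ as required. I do not anticipate serious obstacles: the only subtle point is to justify that $v$, being subharmonic only in the distributional sense and involving the boundary $\partial\Om$ where $w$ may be merely in $H^1$, still satisfies the pointwise mean value inequality at $x$; this is standard since subharmonic distributions coincide a.e.\ with their (upper semicontinuous) subharmonic representative, which does obey the mean value inequality everywhere, and $x\in\Om$ is a Lebesgue point of $w$.
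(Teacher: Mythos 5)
Your proposal is correct and follows essentially the same approach as the paper: add the quadratic correction $|y-x|^2/(2N)$ to obtain a subharmonic function, invoke the mean value inequality at $x$, compute the correction term as $\delta^2/(2(N+2))$, and take $\delta_0=\sqrt{(N+2)\theta}$. The extra remarks justifying why the mean value inequality applies to the distributional subsolution are a welcome clarification but do not change the argument.
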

\begin{proof}
Since for every $x_0 \in \R^N$ the function $x \mapsto w(x)+\frac{|x-x_0|^2}{2N}$ is subharmonic in $\R^N$, we have that, for all $\delta>0$ \[
\theta\leq w(x_0)\leq \frac{1}{|B_\delta|}\int_{B_\delta(x_0)}{(w(x)+\frac{|x-x_0|^2}{2N})\,dx}=\frac{1}{|B_\delta|}\int_{B_\delta (x_0)}{w}dx+\frac{\delta^{2}}{2(N+2)}.
\]
For some $ \delta_0 $ sufficiently small (e.g equal to $\sqrt{\theta (N+2)}$), we have $\forall\;  0< \delta\leq \delta _0$ \[
\int_{B_{\delta}(x)}{w}dx\geq\frac{\theta\omega_N}{2}\delta^N.
\]
\end{proof}

\section{Control of the spectrum by subsolutions}\label{c}
Before stating our first result, we outline the main ideas.
Let $\tilde \Om\subset \R^N$ be a given open set of finite measure. Assume that for some set $\Om\subset \tilde \Om$  and for some constant $c>0$ we have
\begin{equation}\label{bmd01}
E(\Om)+c|\Om| \le E(\tilde \Om)+c|\tilde \Om|.
\end{equation}
Then, we shall observe  that a certain number of low eigenvalues of the rescaled set $\Big ( \frac{|\tilde \Om|}{|\Om|}\Big ) ^\frac 1N\Om$ are not larger than the corresponding eigenvalues on $\tilde \Om$, provided that $c$ is small enough. 
Smaller is the constant $c$, more eigenvalues satisfy this property.  Indeed, from \eqref{bm02}, we get
\begin{equation}\label{bmd04}
\lb_k(\Om)-\lb_k(\tilde \Om) \le 4k^2e^{1/4\pi}\la_k(\Om)\la_k(\tilde \Om)^{(N+2)/2} [E(\Om)-E(\tilde \Om)]. 
\end{equation}
Setting $K_{\Om,\tilde \Om}= 4k^2e^{1/4\pi}\la_k(\Om)\la_k(\tilde \Om)^{(N+2)/2}$, 
using inequality \eqref{bmd01} we get

\begin{equation}\label{bmd05}
\lb_k(\Om)-\lb_k(\tilde \Om) \le c K_{\Om,\tilde \Om}  (|\tilde \Om|-|\Om|)\le c K_{\Om,\tilde \Om}  |\tilde \Om|^{\frac{N-2}{N}} \frac{N}{2}  ( |\tilde \Om|^{\frac{2}{N}}- |\Om|^{\frac{2}{N}}).
\end{equation}

Then, for every $\Lambda$ such that 
\begin{equation}\label{bmd10}
c K_{\Om,\tilde \Om}  |\tilde \Om|^{\frac{N-2}{N}} \frac{N}{2} \le \Lambda
\end{equation}
we get
$$\lb_k(\Om)-\lb_k(\tilde \Om)  \le \Lambda( |\tilde \Om|^{\frac{2}{N}}- |\Om|^{\frac{2}{N}}),$$
so
$$\lb_k(\Om)+\Lambda |\Om|^{\frac{2}{N}}\le \lb_k(\tilde\Om)+\Lambda |\tilde\Om|^{\frac{2}{N}}.$$
If $c$ is small enough so that we can choose $\Lambda$ satisfying $\lb_k(\tilde\Om)=\Lambda |\tilde\Om|^{\frac{2}{N}}$, we get 
$$\lb_k(\Om)|\Om|^{\frac{2}{N}}\le  \lb_k(\tilde\Om) |\tilde\Om|^{\frac{2}{N}}.$$
The construction  above can be carried out provided that one has control on an upper bound of $K_{\Om,\tilde \Om}$ in \eqref{bmd10}. We shall prove that this is the case, if $c$ is small enough.

\begin{lemma}\label{choicec}
Let $k\in \N, K>0$ and $\tilde \Om\subset \R^N$ be an open set of unit measure, satisfying $\la_k(\tilde \Om)\leq K$.
There exist two  constants $c, \beta>0$ depending only on $K$ and $N$ such that 
 for all $\Om\subset \tilde \Om$ satisfying
\begin{equation}\label{bmd02}
E(\Om)+c|\Om|\leq E(\tilde \Om)+c|\tilde \Om|
\end{equation}
 we have
 \begin{equation}\label{optresc}
\la_i(\Om)|\Om|^{2/N}\leq \la_i(\tilde \Om)|\tilde \Om|^{2/N},\qquad\forall\;i=1,\dots,k
\end{equation}
and $|\Om|\geq\beta |\tilde \Om|$.
\end{lemma}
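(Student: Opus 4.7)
The plan is to execute the formal computation already sketched in \eqref{bmd04}--\eqref{bmd10}, pinning down an explicit smallness threshold for $c$ depending only on $K$, $N$ (and $k$, which by Berezin--Li--Yau is in turn controlled by $K,N$ since $|\tilde\Om|=1$). The driving input is that hypothesis \eqref{bmd02} rewrites as
\[
d_\gamma(\Om,\tilde\Om)=2(E(\Om)-E(\tilde\Om))\le 2c(|\tilde\Om|-|\Om|)\le 2c,
\]
so a small $c$ forces $\Om$ to be $\gamma$-close to $\tilde\Om$, and I would feed this single bound directly into \eqref{bm02}.

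I would carry out two substeps. First, to secure the lower bound on $|\Om|$, apply \eqref{bm02} with $k=1$:
\[
\frac{1}{\la_1(\tilde\Om)}-\frac{1}{\la_1(\Om)}\le 2e^{1/4\pi}\la_1(\tilde\Om)^{N/2}d_\gamma\le 4c\,e^{1/4\pi}K^{N/2}.
\]
Choosing $c\le 1/(8\,e^{1/4\pi}K^{(N+2)/2})$ makes the right-hand side at most $1/(2K)\le 1/(2\la_1(\tilde\Om))$, which yields $\la_1(\Om)\le 2K$; Faber--Krahn $\la_1(\Om)|\Om|^{2/N}\ge C_N$ then produces $|\Om|\ge \beta:=(C_N/(2K))^{N/2}$. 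Second, for each $i\le k$, apply \eqref{bm02} once more, multiply by $\la_i(\Om)\la_i(\tilde\Om)$, use $\la_i(\tilde\Om)\le K$ and $d_\gamma\le 2c(|\tilde\Om|-|\Om|)$, and invoke the elementary bound $|\tilde\Om|-|\Om|\le \tfrac{N}{2}(1-|\Om|^{2/N})$ (from the mean value theorem applied to $t\mapsto t^{N/2}$ on $[|\Om|^{2/N},1]$, using $|\tilde\Om|=1$) to obtain
\[
\la_i(\Om)-\la_i(\tilde\Om)\le 2cNk^{2}e^{1/4\pi}K^{(N+2)/2}\,\la_i(\Om)\,(1-|\Om|^{2/N}).
\]
Imposing the further smallness $c\le 1/(2Nk^{2}e^{1/4\pi}K^{(N+2)/2})$, the prefactor on the right is at most $1$, and rearranging gives $\la_i(\Om)|\Om|^{2/N}\le\la_i(\tilde\Om)$, i.e.\ \eqref{optresc}. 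The final $c$ is the minimum of the two thresholds.

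The main subtlety is the interplay between the two conclusions. In deriving \eqref{optresc}, the factor $\la_i(\Om)$ appears on both sides of the estimate and cancels, so no a priori upper bound on $\la_k(\Om)$ is needed—only $\la_i(\tilde\Om)\le K$, which is given by hypothesis. However, the lower bound $|\Om|\ge\beta$ cannot be extracted from \eqref{optresc} alone, since the rescaled inequality $\la_1(\Om)|\Om|^{2/N}\le K$ is perfectly consistent with $|\Om|$ arbitrarily small; this is precisely why the independent a priori bound $\la_1(\Om)\le 2K$ from the first substep is indispensable, and why a slightly stronger smallness is required of $c$ there. Once both substeps are in place, the statement follows by taking $c$ to be the minimum of the two explicit thresholds, each depending only on $K,N,k$.
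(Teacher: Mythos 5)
Your proof is correct, but it takes a genuinely different — and somewhat leaner — route than the paper's. The paper's proof spends Steps~1 and~2 establishing, via the Talenti and van den Berg bounds on the torsion function, the two-sided comparison $\la_i(\tilde\Om)\le\la_i(\Om)\le(8+6N\log2)M_i\la_i(\tilde\Om)$; that a priori \emph{upper} bound on $\la_i(\Om)$ is then fed into the $\gamma$-distance estimate~\eqref{bm02} in Step~3 so as to control the constant $K_{\Om,\tilde\Om}=4k^2e^{1/4\pi}\la_k(\Om)\la_k(\tilde\Om)^{(N+2)/2}$, whose troublesome factor is precisely $\la_k(\Om)$. Your cancellation observation — once $\alpha:=2cNk^2e^{1/4\pi}K^{(N+2)/2}\le1$, the inequality $\la_i(\Om)-\la_i(\tilde\Om)\le\alpha\,\la_i(\Om)(1-|\Om|^{2/N})$ rearranges directly to $\la_i(\Om)|\Om|^{2/N}\le\la_i(\tilde\Om)$ because the stray $\la_i(\Om)$ on the right is absorbed on the left — makes that upper bound on $\la_i(\Om)$ unnecessary for~\eqref{optresc}, and bypasses the torsion-function $L^\infty$ comparison entirely. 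Similarly, for the volume lower bound (the paper's Step~4) the paper again goes through $\|w_\Om\|_\infty\ge\tfrac12\|w_{\tilde\Om}\|_\infty$ and the Talenti inequality, while you extract $\la_1(\Om)\le2K$ directly from~\eqref{bm02} at index $1$ with the crude bound $d_\gamma\le2c$ and then invoke Faber--Krahn; this also implicitly rules out $\Om=\emptyset$, which in the paper's scheme is the role of Step~1. Both approaches are valid; yours relies only on~\eqref{bm02}, the identity $d_\gamma=2(E(\Om)-E(\tilde\Om))$, and Faber--Krahn, whereas the paper's additionally produces the two-sided comparison~\eqref{la1} as a byproduct, which is convenient elsewhere in their development. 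One small point worth making explicit when writing this up: \eqref{bm02} applied at index $i\le k$ carries the constant $i^2$, which you majorize uniformly by $k^2$ — fine, but it should be stated, as should the observation (which you do make) that $k$ itself is controlled by $K$ and $N$ via Berezin--Li--Yau, so that the final $c,\beta$ depend only on $K,N$ as required.
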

\begin{proof}
We divide the proof in several steps.

\noindent {\it Step 1.} The constant $c$ can be chosen such that $E(\tilde \Om)+c|\tilde \Om|$ is negative.
In order to find the right information on $c$, we start by proving the following inequality: 
\begin{equation}
\int_{\tilde \Om}{w_{\tilde \Om}}dx\geq C(N)\frac{1}{(2\la_1(\tilde \Om))^{\frac{N+2}{2}}},
\end{equation}
with $C(N):=\frac{(2N)^{\frac{N+2}{2}}\omega_N}{N(N+2)}.$ 
We first note that $(w_{\tilde \Om}-\frac{1}{2\la_1(\tilde \Om)})^+$ is the torsion function of the set $\{w_{\tilde \Om}>\frac{1}{2\la_1(\tilde \Om)}\}$ and  that $\|w_{\tilde \Om}-\frac{1}{2\la_1(\tilde \Om)}\|_{\infty}\geq 1/2\la_1(\tilde \Om)$, thanks to~\eqref{vdb}. As a consequence of the Talenti inequality \eqref{bdm03}, the measure of the set $\{w_{\tilde \Om}>\frac{1}{2\la_1(\tilde \Om)}\}$ is controlled  from below by $\lb_1(\tilde \Om)$,
precisely we have
 \[
\int_{\tilde \Om}{w_{\tilde \Om}}dx\geq \int_{\left\{w_{\tilde \Om}>\frac{1}{2\la_1(\tilde \Om)}\right\}}{\left(w_{\tilde \Om}-\frac{1}{2\la_1(\tilde \Om)}\right)}dx+\int_{\left\{w_{\tilde \Om}>\frac{1}{2\la_1(\tilde \Om)}\right\}}{\frac{1}{2\la_1(\tilde \Om)}}dx\geq C(N)\frac{1}{(2\la_1(\tilde \Om))^{\frac{N+2}{2}}}.
\]
%For sake of completeness, the computation for a ball of radius $R$ (which has $\|w\|_{\infty}=R^2$), gives\[
%\int_{B_R}{\frac{R^2-|x|^2}{2N}}=R^{N+2}\left(\frac{\omega_N}{2N}-\frac{(N-1)\omega_{N-1}}{2N(N+2)}\right)=C(N)^2R^{N+2}.
%\]
Then it is clear that we have \[
E(\tilde \Om)+c|\tilde \Om|\leq -\frac{C(N)}{2(2\la_1(\tilde \Om))^{\frac{N+2}{2}}}+c|\tilde \Om|.
\]
The right hand side above is negative, as soon as we choose 
\begin{equation}
c\leq \frac{C(N)}{2|\tilde \Om|(2K)^{\frac{N+2}{2}}},
\end{equation}
since $\la_1(\tilde \Om)\leq K$.

\noindent {\it Step 2.}  The constant $c$ can be chosen such that for every $\Om$ satisfying \eqref{bmd02}, we have
$\|w_\Om\|_\infty \ge \frac 12 \|w_{\tilde \Om}\|_\infty.$
 Indeed, denote $h:=\|w_{\tilde \Om}\|_\infty$ and assume that $\|w_{\Om}\|_\infty <  \frac{\|w_{\tilde \Om}\|_\infty}{2}$.
Then \[
\begin{split}
0&\leq c|\Om|\leq c|\tilde \Om|+\frac12\int{w_{\Om}}dx-\frac12\int{w_{\tilde \Om}}dx\\
&=c|\tilde \Om|+\frac12\int{w_{\Om}}dx-\frac12\int{\min{\left\{w_{\tilde \Om}, h/2\right\}}}dx-\frac12\int_{\left\{w_{\tilde \Om}>h/2\right\}}{\left(w_{\tilde \Om}-h/2\right)}dx\\
&\leq c|\tilde \Om|-\frac12\int_{\left\{w_{\tilde \Om}>h/2\right\}}{\left(w_{\tilde \Om}-h/2\right)}dx.
\end{split}
\]
Thanks to the fact that $(w_{\tilde \Om}-h/2)^+=w_{\left\{w_{\tilde \Om}>h/2\right\}}$ using the same argument as in Step 1, we have that \[
C(N)h^{\frac{N+2}{2}}\leq \frac12\int_{\left\{w_{\tilde \Om}>h/2\right\}} ({w_{\tilde \Om}-h/2})^+\leq c|\tilde \Om|.
\]
Consequently, if \[
c\leq C(N)K^{-\frac{N+2}{2}}
\]
then  $\|w_{\Om}\|_\infty\ge \frac{\|w_{\tilde \Om}\|_\infty}{2}$, since  by inequality~\eqref{vdb} $\|w_{\tilde \Om}\|_{\infty}\geq \frac{1}{\la_1(\tilde \Om)}\geq \frac{1}{K}$.

We note that, using the inequalities  \eqref{boundk} and \eqref{vdb}  together with the fact that $\|w_{\Om}\|_{\infty}\geq\|w_{\tilde \Om}\|_{\infty}/2$, one can easily deduce that for every $i \in \N$ the corresponding eigenvalues on $\Om$ and $\tilde \Om$ are comparable
\begin{equation}\label{la1}
\la_i(\tilde \Om) \le \la_i(\Om)\leq (8+6N \log 2) M_i \la_i(\tilde \Om).
\end{equation}

\noindent {\it Step 3.} Proof of inequality \eqref{optresc}. Choosing $c$ satisfying Steps 1 and 2, and
 \begin{equation}\label{bmd06}
c\leq \frac{\la_1(B)}{2M_kk^2 (8+6N\log 2)e^{1/4\pi}k^2K^{\frac{N}{2}+2}},
\end{equation}
where $B$ is the ball of volume equal to $1$, from the Faber-Krahn inequality we have
\begin{equation}
c2M_kk^2 (8+6N\log 2)e^{1/4\pi}k^2K^{\frac{N}{2}+2}\leq \frac{\la_1(\tilde \Om)}{|\tilde \Om|^{2/N}},
\end{equation}
 or this precisely gives, in view of \eqref{bmd04}-\eqref{bmd05},
\begin{equation}\label{bmd11}
\la_i(\Om)+\Lambda|\Om|^{2/N}\leq \la_i(\tilde \Om)+\Lambda|\tilde \Om|^{2/N}.
\end{equation}
  Thanks to Step 2, $c$ is such that $\Lambda\leq \frac{\la_1(\tilde \Om)}{|\tilde \Om|^{2/N}}$. Consequently,  \eqref{bmd11} also holds for all values larger than $\frac{\la_1(\tilde \Om)}{|\tilde \Om|^{2/N}}$, thus this inequality holds for all $i=1,\dots, k$ with constant $\frac{\la_i(\tilde \Om)}{|\tilde \Om|^{2/N}}$. 
 
With the choice of $\frac{\la_i(\tilde \Om)}{|\tilde \Om|^{2/N}}$, using the arithmetic geometric inequality, we note that 
\[
\la_i(\Om)|\Om|^{2/N}\leq \la_i(\tilde \Om)|\tilde \Om|^{2/N}, \qquad\forall\;i=1,\dots, k.
\]

\noindent {\it Step 4.} In order to control the diameter of the rescaled set, we prove the existence of  $\beta>0$ such that $|\Om|\geq \beta|\tilde \Om|$. 
Indeed, we have the chain of inequalities (the last one being a consequence of \eqref{bdm03})
 \[
\frac{1}{2K}\le \frac{1}{2\la_1(\tilde \Om)}\leq \frac{\|w_{\tilde \Om}\|_{\infty}}{2}\leq \|w_{\Om}\|_{\infty} \leq  \Big(\frac{|\Om|}{\omega_N}\Big ) ^\frac 2N \frac{1}{2N},
\]
which gives the estimate for $\beta$, depending only on $K, N$.
\end{proof}
\begin{remark}
Inequality \eqref{la1} in Step 2 could also be obtained in a different way, as a consequence of inequality \eqref{bm02} by choosing $c$ small enough. Indeed, if $c$ is such that 
\begin{equation}\label{bmd50}
2k^2e^{1/4\pi}\la_k(\tilde \Om)^{N/2} d_\gamma(\Om,\tilde \Om)\le \frac{1}{2\lb_k(\tilde \Om)},
\end{equation}
then $ \lb_k(\tilde \Om)\le \lb_k (\Om)\le 2 \lb_k(\tilde \Om)$.  Since by the hypothesis of Lemma \ref{choicec} we have
$$E(\Om)-E(\tilde \Om) \le c,$$
inequality \eqref{bmd50} holds as soon as
$$  c  \le \frac{1}{8k^2e^{1/4\pi} \lb_k(\tilde \Om)^{N/2 +1}}.$$
\end{remark}

Now, we are in position to prove the first result. Below, the diameter of a disconnected set is referred as the sum of the diameters of each connected component.
\begin{theorem}\label{bounded}
{For every $K>0$, there exists $D,C>0$ depending only on $K$  and the dimension $
N$ such that for every open set $\tilde \Om\subset \R^N$ with $|\tilde \Om|=1$  there exist an open set $\Om$ with $\diam(\Om)\leq D$, $|\Om|=1$, $\p(\Om)\leq C$ and if $\la_k(\tilde \Om)\leq K$ then $\la_k(\Om)\leq \la_k(\tilde \Om)$.}
\end{theorem}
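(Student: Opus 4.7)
The plan is to construct $\Om$ as a minimizer of the shape functional $U \mapsto E(U) + c|U|$ among (quasi-open) subsets $U \subset \tilde\Om$, for a sufficiently small $c = c(K,N) > 0$, and then rescale the resulting set to unit measure. The constant $c$ is the one produced by Lemma \ref{choicec}; because that lemma's choice of $c$ also depends on $k$, I would first invoke the Berezin--Li--Yau inequality, which bounds the number of eigenvalues of $\tilde\Om$ below $K$ by $k_0 := (K/C_N)^{N/2}$, and then take $c$ to be the minimum of the constants from Lemma \ref{choicec} over all $k \le k_0$ (and similarly for the constant $\beta$). Both $c$ and $\beta$ produced in this way depend only on $K$ and $N$.

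With $c$ so fixed, existence of a minimizer $\Om$ of $E(U) + c|U|$ over quasi-open subsets of $\tilde\Om$ follows from the standard Buttazzo--Dal Maso $\gamma$-compactness of this class (it sits inside a set of finite measure) together with the $\gamma$-lower semicontinuity of $E$ and of $|\cdot|$, see \cite{BM}. An open representative with the same properties can be produced if desired, as noted at the end of the introduction.

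By its internal optimality $E(\Om) + c|\Om| \le E(U) + c|U|$ for every $U \subset \Om$, the set $\Om$ is a shape subsolution for the torsion energy with constant $c$. The structural results of \cite{blak} therefore provide a diameter bound $D_0$ (understood as the sum of diameters of the connected components) and a perimeter bound $C_0$, both depending only on $c$ and $N$, hence only on $K$ and $N$. Comparing $\Om$ with the competitor $\tilde\Om$ itself gives $E(\Om) + c|\Om| \le E(\tilde\Om) + c|\tilde\Om|$, so Lemma \ref{choicec} applies and yields $|\Om| \ge \beta$ together with $\la_k(\Om) |\Om|^{2/N} \le \la_k(\tilde\Om)$ whenever $\la_k(\tilde\Om) \le K$. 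Setting $\Om' := |\Om|^{-1/N} \Om$ to restore unit measure and using the scalings
\[
\la_k(\Om') = |\Om|^{2/N} \la_k(\Om), \qquad \diam(\Om') = |\Om|^{-1/N}\diam(\Om), \qquad \p(\Om') = |\Om|^{-(N-1)/N} \p(\Om),
\]
combined with $|\Om| \ge \beta$, produces $\Om'$ with unit measure, $\la_k(\Om') \le \la_k(\tilde\Om)$, $\diam(\Om') \le \beta^{-1/N} D_0 =: D$, and $\p(\Om') \le \beta^{-(N-1)/N} C_0 =: C$.

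The main difficulty is importing two external ingredients cleanly: the compactness/lower semicontinuity needed to produce the minimizer in the admissible class, and the precise quantitative diameter and perimeter estimates of \cite{blak} for shape subsolutions, expressed in terms of $c$ and $N$ alone. Once these are granted, the proof reduces to invoking Lemma \ref{choicec} and elementary scaling, with the only bookkeeping being the uniformization in $k$ performed via Berezin--Li--Yau.
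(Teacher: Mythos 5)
Your proposal is correct and follows essentially the same route as the paper: minimize $E(U)+c|U|$ over subsets of $\tilde\Om$ with $c$ from Lemma \ref{choicec}, observe the minimizer is a shape subsolution so the diameter and perimeter bounds of \cite{blak} apply, then invoke Lemma \ref{choicec} and rescale. The only cosmetic differences are that the paper cites Hayouni for existence and openness of the minimizer (rather than Buttazzo--Dal Maso plus a remark), and it simply takes $c$ corresponding to $k=k_0$ rather than a minimum over $k\le k_0$ (which is the same, since the constant in Lemma \ref{choicec} is nonincreasing in $k$).
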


\begin{proof}
From the Berezin-Li-Yau inequality, the maximal index $k_0$  for which it is possible that $\lb_{k_0}(\tilde \Om)\le K$ is lower than a constant depending only on $K$ and $N$. 

Let us consider the minimum problem\[
\min_{\Om\subset \tilde \Om}{\left\{E(\Om)+c|\Om|\right\}},
\]
with $c$ the constant given by Lemma~\ref{choicec} and  $k= k_0$. This problem has at least one solution, denoted $\Om^*$, which is an open set (see for instance \cite{hayouni}) and it is also a shape subsolution of the energy. The results from~\cite{blak} give that $\diam( \Om^*)\leq D(c)$ and that $\p(\Om^*)\leq C(c)$ and we remind that $c$ depends only on $K$ and the dimension $N$.
Moreover, using Step 4 of Lemma~\ref{choicec}, we have that the set $\Om:=|\Om^*|^{-1/N}\Om^*$ has still diameter and perimeter bounded by constants depending only on $K,N$, thanks to the fact that $|\Om|\geq \beta |\tilde \Om|$.
Moreover we have that \[\forall i=1,\dots, k, \;\;
\la_i(\Om)\leq \la_i(\tilde \Om),
\] 
since $\lb_k(\tilde \Om) \le K$.
\end{proof}

\section{Control of the perimeter}\label{cl}

In order to give precise statements, 
we introduce a suitable notion of diameter in a prescribed direction. In the coordinate direction $e_1\in \R^N$ we set
\[
\diam_{e_1}(\Om):=\mathcal{H}^1\left(t\in\R\;:\;\mathcal{H}^{N-1}(\Om\cap\{x_1=t\})>0\right).
\]

\begin{theorem}\label{main}
{For every $K,P>0$, there exist $D>0$ depending only on $K,P$ and the dimension $
N$, such that for every open set $\tilde \Om\subset \R^N$ with $|\tilde \Om|=1$, $\p(\tilde \Om) \le P$,   there exists an open set $\Om$ of unit measure with $\diam_{e_1}(\Om)\leq D$,  $\p(\Om)\leq \p(\tilde \Om)$ such that if $\la_k(\tilde \Om)\leq K$ then $\la_k(\Om)\leq \la_k(\tilde \Om)$.}
\end{theorem}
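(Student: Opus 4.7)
The plan is to excise from $\tilde\Om$ a thin strip $\{t_1\le x_1\le t_2\}$ on which the torsion function $w=w_{\tilde\Om}$ is uniformly small, and then rescale the remainder to unit measure. The spectrum will be controlled through Lemma~\ref{choicec}, applied with the constants $c,\beta$ supplied for the given $k$ and $K$, provided the excision obeys the subsolution inequality $E(\Om')-E(\tilde\Om)\le c(|\tilde\Om|-|\Om'|)$. The perimeter will be controlled by arranging that the cross-sections $f(t_i):=\H^{N-1}(\tilde\Om\cap\{x_1=t_i\})$ are so small that the isoperimetric lower bound $\kappa_N\mu^{(N-1)/N}$ on the perimeter of the discarded strip (of mass $\mu$) beats the perimeter inflation produced by the rescaling. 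Accordingly I would fix $\theta,\mu_0>0$, depending only on $K,P,N$, satisfying $\theta\le 2c\mu_0$ (for the spectral step) and $\mu_0\le(\kappa_N/(2P))^N$ (for the perimeter step, using the elementary bound $1-(1-\mu)^{(N-1)/N}\le\mu$ valid for $\mu\in[0,1]$).

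Lemma~\ref{density} combined with Saint Venant's inequality~\eqref{saintvenant} shows that $\{w>\theta\}$ is contained in a union of balls of radius $\sqrt{\theta(N+2)}$ whose number is controlled in terms of $\theta$, $N$ and $\int w$; a Vitali-type covering thus gives that the $e_1$-projection of $\{w>\theta\}$ has $\H^1$-measure at most a constant $D_0=D_0(K,P,N)$. Consequently, if $\diam_{e_1}(\tilde\Om)$ exceeds $D_0$ by a definite margin, there is a subinterval $J\subset\R$ on which $w\le\theta$ on every cross-section of $\tilde\Om$. By Fubini $\int_J f\,dt\le 1$, and a pigeonhole argument based on Chebyshev's inequality together with the continuity of the cumulative mass produces $t_1<t_2$ in $J$ with $\mu:=|\tilde\Om\cap\{t_1<x_1<t_2\}|\in[\theta/(2c),\mu_0]$ and $f(t_1)+f(t_2)\le\tfrac14\kappa_N\mu^{(N-1)/N}$. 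Set $\Om':=\tilde\Om\setminus\{t_1\le x_1\le t_2\}$ and $\Om:=|\Om'|^{-1/N}\Om'$. The function $u:=w_{\tilde\Om}-w_{\Om'}\ge 0$ is harmonic on $\Om'$, with boundary values bounded by $w\le\theta$ on the two cut hyperplanes and zero on $\partial\tilde\Om$, so the maximum principle yields $u\le\theta$ on $\R^N$; hence
\[
E(\Om')-E(\tilde\Om)=\tfrac12\int u\,dx\le\tfrac{\theta}{2}|\tilde\Om|\le c\mu,
\]
and Lemma~\ref{choicec} gives $\la_i(\Om)\le\la_i(\tilde\Om)$ for $i=1,\dots,k$. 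For the perimeter, the identity $\p(\Om')=\p(\tilde\Om)-\p(\tilde\Om;S^\circ)+f(t_1)+f(t_2)$ with $S:=\{t_1<x_1<t_2\}$, together with the isoperimetric bound $\p(\tilde\Om;S^\circ)+f(t_1)+f(t_2)\ge\kappa_N\mu^{(N-1)/N}$ applied to $\tilde\Om\cap S$ and the calibration of $\mu_0$, yields $\p(\Om)\le\p(\tilde\Om)$.

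The cut reduces $\diam_{e_1}$ by $t_2-t_1$, and the mild rescaling factor $(1-\mu)^{-1/N}$ erodes this gain only slightly. Since the invariants $|\Om|=1$, $\p(\Om)\le P$ and $\la_k(\Om)\le K$ are all preserved by this step, the same construction can be iterated on $\Om$ and drives $\diam_{e_1}$ below the target $D=D(K,P,N)$ in boundedly many steps. The main obstacle is twofold: first, the joint calibration of $\theta$, $\mu_0$ and the cross-section threshold so that the spectral, perimeter and diameter inequalities are simultaneously satisfied; second, the existence of a cut pair $(t_1,t_2)$ with all the required properties inside $J$, which is delicate when $\tilde\Om$ has a long thin tail carrying very little mass in the good region, forcing one to combine the above ``slab cut'' with the direct spectral estimate \eqref{bm02} and the strict rescaling factor $(1-\mu)^{2/N}<1$ in order to absorb the resulting eigenvalue perturbation. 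Organizing this case distinction cleanly is precisely the ``De Giorgi type'' bookkeeping announced in the introduction.
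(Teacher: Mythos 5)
Your overall strategy is close in spirit to the paper's proof: locate the region where the torsion function $w=w_{\tilde\Om}$ is small (via Lemma~\ref{density} and Saint Venant), cut out strips there, rescale to unit measure, control eigenvalues through Lemma~\ref{choicec}, and control perimeter via an isoperimetric comparison. But the mechanism you use to verify the subsolution hypothesis of Lemma~\ref{choicec} is too weak, and the hole you acknowledge at the end is in fact fatal to the argument as written.

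The issue is the bound obtained from the maximum principle. You correctly show $u:=w_{\tilde\Om}-w_{\Om'}\le\theta$ everywhere, whence $E(\Om')-E(\tilde\Om)=\tfrac12\int u\le\tfrac{\theta}{2}$. This bound is \emph{independent of the mass $\mu$ removed}: it is of order $\theta$ no matter how thin the excised slab is, because $u$ is harmonic in $\Om'$ and does not decay rapidly in general. To feed Lemma~\ref{choicec} one needs $E(\Om')-E(\tilde\Om)\le c\mu$, which forces $\mu\ge\theta/(2c)$. There is, however, no a priori lower bound on the mass that $\tilde\Om$ carries in the good region $\{w\le\theta\}$: a unit ball with a long tube of cross-section $\varepsilon$ attached has $w\sim\varepsilon^2$ in the tube, so for $\varepsilon^2<\theta$ the entire tube lies in the good region, yet its total mass $\sim L\varepsilon^{N-1}$ can be made arbitrarily small while $L$ is arbitrarily large. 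Your pigeonhole cannot then produce a cut with $\mu\ge\theta/(2c)$, and no amount of ``joint calibration'' of $\theta$ and $\mu_0$ fixes this, because the obstruction is scale-dependent. Appealing to \eqref{bm02} and the strict rescaling factor does not help either: both are downstream of the same energy estimate.

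The paper avoids this by replacing the maximum-principle comparison with the Alt--Caffarelli--type Lemma~\ref{caffarelli} (and Corollary~\ref{cafcor}). There one tests the torsion energy of $\tilde\Om\setminus S_r$ against the competitor $w\wedge\eta$ for a barrier $\eta$ solving $-\Delta\eta=1$ in the annular strip $S_{2r}\setminus S_r$, and a trace inequality on $\partial S_r$ shows that if the subsolution inequality \eqref{ipcaf} were violated then necessarily $\max_{S_{2r}}w\ge C_0r$. Contraposing: whenever $w<C_0r$ on $S_{2r}$, cutting $S_r$ satisfies $E(\tilde\Om\setminus S_r)+c|\tilde\Om\setminus S_r|\le E(\tilde\Om)+c|\tilde\Om|$ \emph{regardless of the mass removed}. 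That uniformity is exactly what the long-thin-tail case requires and what your bound $\int u\le\theta$ cannot deliver. Incidentally, the ``De Giorgi type argument'' mentioned in the introduction refers to the differential inequality $-m'(t)\gtrsim m(t)^{(N-1)/N}$ in Lemma~\ref{perbound} used for the perimeter, not to the spectral step, so your hope that De Giorgi-style bookkeeping resolves the spectral gap points in the wrong direction. Your isoperimetric perimeter estimate is essentially a sound pigeonhole variant of Lemma~\ref{perbound}, and the calibration $\mu_0\le(\kappa_N/(2P))^N$ is correct; the spectral half of the argument is the part that must be replaced by the Caffarelli-type lemma.
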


For every $x_1\in \R$ and $r>0$, we define the \emph{strip} centered in $x_1$ of width $2r$ orthogonal to $\R e_1$ by
\[
S_r(x_1):=[-r+x_1,r+x_1]\times \R^{N-1}.
\] 
Its topological boundary is $ \partial S_r:=\{-r+x_1,r+x_1\}\times \R^{N-1}$. If $x_1=0$, we simply denote $S_r$ instead of $S_r(0)$.

The main idea of the following lemma is inspired from \cite{ac} and was also used  in~\cite{dpv14}, \cite{blak}, and  ~\cite{bbv} under different settings. We point out that here we do not use optimality, but only an inequality between two fixed domains. 
\begin{lemma}\label{caffarelli}
For all $c>0$, there exist $C_0,r_0 >0$, with $C_0r_0\leq \min{\left\{\frac c2, \frac{1}{2K}\right\}}$ such that if for some $r\leq r_0$ the function $w$ is not identically zero in  $S_r$ and 
\begin{equation}\label{ipcaf}
E(\tilde \Om)+c|\tilde \Om|\leq E(\tilde \Om\setminus S_{r})+c|\tilde \Om\setminus S_{r}|,
\end{equation}
then 
\begin{equation}\label{ipcaf2}
\max_{S_{2r}}{w_{\tilde \Om}}\ge C_0 r.
\end{equation}

\end{lemma}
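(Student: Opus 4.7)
My plan is to prove the lemma by contradiction: assume $\max_{S_{2r}} w < C_0 r$, and derive a strict violation of the hypothesis \eqref{ipcaf} by exhibiting a competitor for the torsion energy on $\tilde\Om\setminus S_r$ whose value is too close to $E(\tilde\Om)$ to satisfy the inequality.

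The natural competitor is $v := w\eta$, where $\eta\colon\R^N\to[0,1]$ is the Lipschitz function depending only on $x_1$ with $\eta\equiv 0$ on $S_r$, $\eta\equiv 1$ outside $S_{2r}$, affine on $S_{2r}\setminus S_r$, so that $|\nabla\eta|\le 1/r$. Since $\eta$ vanishes on $S_r$, $v\in H^1_0(\tilde\Om\setminus S_r)$ and therefore $E(\tilde\Om\setminus S_r)\le \tfrac12\int|\nabla v|^2-\int v$. Expanding $|\nabla(w\eta)|^2$ and using $-\Delta w=1$ in $\tilde\Om$ tested against $\eta^2 w\in H^1_0(\tilde\Om)$ cancels the cross term, yielding the clean identity
\[
E(\tilde\Om\setminus S_r)-E(\tilde\Om)\le \tfrac12\int w(1-\eta)^2+\tfrac12\int w^2|\nabla\eta|^2.
\]
Plugged into \eqref{ipcaf}, this gives
\[
c\,|\tilde\Om\cap S_r|\le \tfrac12\int w(1-\eta)^2+\tfrac12\int w^2|\nabla\eta|^2.
\]

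Under the contradiction assumption $w\le M:=C_0 r$ on $S_{2r}$, combined with $|\nabla\eta|\le 1/r$, the supports of $(1-\eta)^2$ and $|\nabla\eta|^2$ lie inside $S_{2r}$, so the right-hand side is controlled by a quantity of order $(M+M^2/r^2)|\tilde\Om\cap S_{2r}|=C_0(r+C_0)|\tilde\Om\cap S_{2r}|$. Using $|\tilde\Om\cap S_{2r}|\le|\tilde\Om|=1$, this is an \emph{upper bound} $|\tilde\Om\cap S_r|\le C_0(r+C_0)/(2c)$.

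To close the argument, one must produce a contradictory \emph{lower bound}. Here Lemma \ref{density} enters: since $w\not\equiv 0$ on $S_r$, there is $x_0\in S_r$ with $\theta:=w(x_0)>0$; then $\int_{B_\delta(x_0)}w\ge \tfrac12\theta\omega_N\delta^N$ for $\delta\le\sqrt{\theta(N+2)}$. Combining with the pointwise bound $w\le M$ on $B_\delta(x_0)\subset S_{2r}$ (valid for $\delta\le r$) gives a lower bound on $|\tilde\Om\cap B_\delta(x_0)|$. The constraint $C_0 r_0\le 1/(2K)$ (ensuring $M<\|w\|_\infty/2$ via \eqref{vdb}, so $w$ is far from being uniformly small in $\tilde\Om$) together with $C_0 r_0\le c/2$ should allow the adaptive choice of $\delta$ to promote this ball density into a strict lower bound on $|\tilde\Om\cap S_r|$ exceeding the upper bound above, producing the contradiction.

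The main obstacle is precisely the last step: the competitor $v=w\eta$ delivers an upper bound involving $|\tilde\Om\cap S_{2r}|$ rather than $|\tilde\Om\cap S_r|$ alone, while Lemma \ref{density} yields only integral (not measure) information unless paired with the pointwise bound from the contradiction hypothesis. Calibrating $C_0$ and $r_0$ so that both constraints $C_0r_0\le c/2$ and $C_0 r_0\le 1/(2K)$ force the two bounds to be incompatible — without any assumption on the geometry of $\tilde\Om$ across $S_{2r}$ — is the delicate point where the nondegeneracy argument of Alt–Caffarelli type must be carefully implemented.
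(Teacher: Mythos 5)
Your competitor $w\eta$ (multiplicative, with an affine cutoff) differs essentially from the paper's $w\wedge\eta$ (minimum, with $\eta$ the explicit one-dimensional solution of $-\Delta\eta=1$ in $S_{2r}\setminus S_r$, vanishing on $\partial S_r$ and equal to $\max_{S_{2r}}w$ on $\partial S_{2r}$), and the gap you flag at the end is real and unfixable with your choice. Your identity reduces, under the contradiction assumption $w\le C_0 r$ on $S_{2r}$, to
\[
c\,|\tilde\Om\cap S_r| \;\le\; \tfrac{C_0 r}{2}\,|\tilde\Om\cap S_{2r}| + \tfrac{C_0^2}{2}\,|\tilde\Om\cap (S_{2r}\setminus S_r)|,
\]
but nothing in the hypotheses controls the ratio of $|\tilde\Om\cap S_r|$ to $|\tilde\Om\cap S_{2r}|$: the set $\tilde\Om$ could meet $S_r$ in an arbitrarily small set while filling most of $S_{2r}\setminus S_r$, and then no choice of $C_0,r_0$ produces a contradiction. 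The attempt to rescue this via Lemma~\ref{density} also fails: the level $\theta=w(x_0)$ at a point of $S_r$ has no lower bound — the assumption $w\not\equiv 0$ on $S_r$ is purely qualitative — so the resulting ball-density estimate degenerates. (Also, the constraint $C_0r_0\le 1/(2K)$ plays no role in the proof of this lemma; $K$ never appears there, and that bound is imposed only for Remark~\ref{mtilde} downstream, so it cannot supply the missing nondegeneracy.)

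What makes the paper's competitor succeed is that $-\Delta\eta=1$ in the annulus $S_{2r}\setminus S_r$ allows one integration by parts which annihilates all the bulk terms on $S_{2r}\setminus S_r$, leaving only the flux term $|\eta'(r)|\int_{\partial S_r} w\,d\mathcal{H}^{N-1}$; a trace inequality then pulls this back onto $S_r$ alone, so both sides of the final estimate are expressed through the same local quantity $\int_{S_r}|Dw|^2 + |\tilde\Om\cap S_r|$. Dividing by this (positive since $w\not\equiv 0$ on $S_r$) gives $\min\{\tfrac12,\tfrac c2\}\le |\eta'(r)|C(N)(C_0+1)$, which fails once $C_0,r_0$ are small depending only on $N,c$. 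That exact localization to $S_r$ is the ingredient you are missing, and it is not available with the multiplicative competitor $w\eta$.
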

\begin{proof}
Below, we denote  $w: =w_{\tilde \Om}$ and  $\eps:=\max_{S_{2r}}{w}$ and introduce the  function $\eta\colon \R^N\rightarrow \R^+$ :
\begin{equation}\label{eta}
\left\{
\begin{split}
\eta&=0 \quad \mbox{in }S_r, \qquad \eta=\eps\quad\mbox{in }\R^N\setminus S_{2r},\\
-\Delta \eta&=1\quad \mbox{in } S_{2r}\setminus S_r,\\
\eta&=0\quad \mbox{on }\partial S_r,\\
\eta&=\eps\quad \mbox{on }\partial S_{2r}.
\end{split}
\right.
\end{equation}
Since the function $\min{\{w,\eta\}}:=w\wedge \eta$ belongs to $ H^1_0(\tilde \Om\setminus S_r)$ we get \[
E(\tilde \Om\setminus S_r)\leq \frac12 \int{|D(w\wedge \eta)|^2}dx-\int{w\wedge \eta}dx.
\]
Hypothesis  \eqref{ipcaf} gives \[
\frac12\int{|Dw|^2}dx-\int{w}dx+c|S_r\cap \tilde \Om|\leq \frac12 \int{|D(w\wedge \eta)|^2}dx-\int{w\wedge \eta}dx.
\]
Since $\eps\leq C_0r_0\leq \frac{c}{2}$ we get $w\wedge \eta =w$ in $\tilde \Om\setminus S_{2r}$. Denoting the outer unit normal to a set by $\nu$, \[
\begin{split}
\frac12\int_{S_r}{|Dw|^2}dx+\frac{c}{2}|S_r\cap \tilde \Om|&\leq \frac12\int_{S_r}{|Dw|^2}dx-\int_{S_r}{w}dx+c|S_r\cap \tilde \Om|\\
&\leq\frac12\int_{S_{2r}\setminus S_r}{(|D(w\wedge \eta)|^2-|Dw|^2)}dx-\int_{S_{2r}\setminus S_r}{(w\wedge \eta-w)}dx\\
&=\frac12\int_{S_{2r}\setminus S_r\cap \{w>\eta\}}{(|D\eta|^2-|Dw|^2)}dx-\int_{S_{2r}\setminus S_r}{(w-\eta)^+}dx\\
&\leq \int_{S_{2r}\setminus S_r\cap \{w>\eta\}}{-D\eta\cdot D(w-\eta)}dx-\int_{S_{2r}\setminus S_r}{(w-\eta)^+}dx\\
&=-\int_{\partial S_r}{\frac{\partial \eta}{\partial \nu}(w-\eta)^+}dx=|\eta'(r)|\int_{\partial S_r}{w\,d\mathcal{H}^{N-1}}.
\end{split}
\]
The following trace inequality holds:\[
\int_{\partial S_r}{w\,d\mathcal{H}^{N-1}}\leq C(N)\left(\frac{1}{r}\int_{S_r}{w}dx+\int_{S_r}{|Dw|}dx\right).
\]
By using  hypothesis~\eqref{ipcaf2} and the Cauchy-Schwarz inequality on the gradient term, we get to\[
\int_{\partial S_r}{w\,d\mathcal{H}^{N-1}}\leq C(N)\left((C_0+\frac12)|S_r\cap \tilde \Om|+\frac12\int_{S_r\cap \tilde \Om}{|Dw|^2}dx\right).
\]
If $\int_{S_r}{|Dw|^2}dx+|S_r\cap \tilde \Om|=0$ then $w=0$ in the ``strip'' $S_r.$
Otherwise $\int_{S_r}{|Dw|^2}dx+|S_r\cap \tilde \Om|>0$ and from the previous inequality we get \[
\min{\left\{\frac12, \frac{c}{2}\right\}}\leq |\eta'(r)|C(N)(C_0+1).
\]
Since $|\eta'(r)|=|C_0-r/2|$,  choosing $C_0$ and $r_0$ small enough we get a contradiction. We notice that the choice of these constants depends \emph{only} on $N$ and on $c$.
\end{proof}

The following corollary  can be proved in the very same way as  Lemma~\ref{caffarelli}.
\begin{corollary}\label{cafcor}
For all $c>0$ there exist $C_0,r_0>0$ with $C_0r_0\leq \min{\left\{\frac c2, \frac{1}{2K}\right\}}$ such that if for some $r\leq r_0$ and $x_1,\dots,x_n\in\R$ such that $S_{2r}(x_i)\cap S_{2r}(x_j)=\emptyset$ for all $i\not=j,$ it holds \[
\max{\left\{w_{\tilde \Om}(x)\;:\;x\in \cup_i{S_{2r}(x_i)}\right\}}\leq C_0r_0,
\]
then we have that 
\begin{equation}\label{bmd07}
E(\tilde \Om\setminus \cup_i{S_{r}(x_i)})+c|\tilde \Om\setminus \cup_i{S_r(x_i)}|\leq E(\tilde \Om)+c|\tilde \Om|.
\end{equation}
\end{corollary}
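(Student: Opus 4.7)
The plan is to mirror Lemma~\ref{caffarelli} simultaneously on all $n$ strips. Because the enlarged strips $S_{2r}(x_i)$ are pairwise disjoint, I would glue the single-strip cutoffs into one global $\eta\colon\R^N\to\R^+$: set $\eta\equiv 0$ on $\cup_i S_r(x_i)$, let $\eta$ solve $-\Delta\eta=1$ on each annulus $A_i:=S_{2r}(x_i)\setminus S_r(x_i)$ with boundary values $0$ on $\partial S_r(x_i)$ and $\eps:=C_0 r_0$ on $\partial S_{2r}(x_i)$, and set $\eta\equiv\eps$ on $\R^N\setminus\cup_i S_{2r}(x_i)$. The disjointness hypothesis is what makes this definition consistent, and the restriction of $\eta$ to each $A_i$ is exactly the same one--dimensional profile that appeared in Lemma~\ref{caffarelli}, with the identical value of $|\eta'(r)|$.

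Writing $w:=w_{\tilde\Om}$, the test function $\phi:=w\wedge\eta$ lies in $H^1_0(\tilde\Om\setminus\cup_i S_r(x_i))$ since it vanishes on every $S_r(x_i)$ and on $\partial\tilde\Om$. Plugging $\phi$ into the variational characterization of the torsion energy and using the identity $|\tilde\Om|-|\tilde\Om\setminus\cup_i S_r(x_i)|=\sum_i|S_r(x_i)\cap\tilde\Om|$, the target inequality \eqref{bmd07} reduces to
\[
\tfrac12\int(|D\phi|^2-|Dw|^2)+\int(w-\phi)\leq c\sum_i|S_r(x_i)\cap\tilde\Om|.
\]
Since $\phi$ and $w$ differ only on $\{w>\eta\}$, I would split the left-hand side into $n$ independent contributions, one per $S_{2r}(x_i)$, and inside each of them run the chain of estimates of Lemma~\ref{caffarelli} verbatim: the pointwise inequality $\tfrac12(|D\eta|^2-|Dw|^2)\leq -D\eta\cdot D(w-\eta)$ on $\{w>\eta\}$, integration by parts on $A_i$ using $-\Delta\eta=1$, absorption of $\int_{S_r(x_i)}w\leq\eps|S_r(x_i)\cap\tilde\Om|\leq(c/2)|S_r(x_i)\cap\tilde\Om|$ via $\eps\leq C_0 r_0\leq c/2$, and a trace-plus-Cauchy--Schwarz bound on $\int_{\partial S_r(x_i)} w$ that kills the remaining boundary term provided $C_0,r_0$ satisfy the same smallness condition as in the lemma.

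The delicate point---and what I expect to be the only real obstacle---is ensuring that the contribution from $\R^N\setminus\cup_i S_{2r}(x_i)$ does not spoil the estimate. The hypothesis $\max_{\cup_i S_{2r}(x_i)}w\leq\eps$ combined with the continuity of $w$ inside $\tilde\Om$ forces the trace of $(w-\eta)^+$ to vanish on every outer face $\partial S_{2r}(x_i)$, so the boundary term produced by the integration by parts on $A_i$ disappears there and only the inner faces $\partial S_r(x_i)$ remain, exactly as in Lemma~\ref{caffarelli}. With this in place, the disjointness hypothesis reduces the whole estimate to $n$ independent copies of the single-strip calculation, and the same choice of $C_0,r_0$ as in Lemma~\ref{caffarelli} works.
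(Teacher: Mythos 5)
Your overall strategy---glue the single-strip competitors of Lemma~\ref{caffarelli} across the pairwise disjoint strips $S_{2r}(x_i)$ and run the same chain of estimates on each annulus $A_i$ separately---is exactly the paper's intent (the paper itself only says the corollary ``can be proved in the very same way as Lemma~\ref{caffarelli}''). The reduction of the target inequality to $\tfrac12\int(|D\phi|^2-|Dw|^2)+\int(w-\phi)\le c\sum_i|S_r(x_i)\cap\tilde\Om|$, the use of disjointness to obtain the measure identity, and the observation that $|\eta'(r)|$ is the same one-dimensional quantity as in the lemma are all correct.

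There is, however, a genuine slip in the construction of the competitor. Taking $\eta\equiv\eps$ on $\R^N\setminus\cup_i S_{2r}(x_i)$ makes $\phi:=w\wedge\eta$ differ from $w$ on $\{w>\eps\}$, a set that by hypothesis is disjoint from $\cup_i S_{2r}(x_i)$ but is in general nonempty; thus the left-hand side does \emph{not} split into $n$ contributions supported on the $S_{2r}(x_i)$. The unaccounted bulk term is
\[
\int_{\{w>\eps\}}\Bigl(\tfrac12\bigl(|D\phi|^2-|Dw|^2\bigr)+(w-\phi)\Bigr)
=\int_{\{w>\eps\}}\Bigl(-\tfrac12|Dw|^2+(w-\eps)\Bigr)
=\tfrac12\int_{\{w>\eps\}}|Dw|^2\ \ge\ 0,
\]
the last equality coming from testing $-\Delta w=1$ against $(w-\eps)^+\in H^1_0(\tilde\Om)$; so it goes the wrong way and is not small. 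You correctly flag the contribution from $\R^N\setminus\cup_i S_{2r}(x_i)$ as the delicate point, but the argument you offer---the vanishing of the trace of $(w-\eta)^+$ on the outer faces $\partial S_{2r}(x_i)$---disposes only of the boundary term produced by integrating by parts on each $A_i$, not of this bulk term. The repair is immediate: take the competitor equal to $w$ on $\R^N\setminus\cup_i S_{2r}(x_i)$ and to $w\wedge\eta$ on $\cup_i S_{2r}(x_i)$; it lies in $H^1_0(\tilde\Om\setminus\cup_i S_r(x_i))$ because $w\le\eps=\eta$ on each $\partial S_{2r}(x_i)$ gives continuity across the outer faces. With that choice the outside contribution is identically zero and the rest of your argument goes through. (For fairness, this is exactly the imprecision already present in the paper's Lemma~\ref{caffarelli}, where ``$\eta=\eps$ in $\R^N\setminus S_{2r}$'' is followed by the unjustified claim ``$w\wedge\eta=w$ in $\tilde\Om\setminus S_{2r}$''; the corrected competitor is clearly what is meant there too.)
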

Here we outline the main idea for proving Theorem~\ref{main}. 
Let $c$ be as in Lemma~\ref{choicec} and $r_0,C_0$ be the constants from Lemma~\ref{caffarelli}, for that particular choice of $c$. We shall remove a finite number of strips $S_{r}(x_i)$ from the region where $w_{\tilde \Om}(x)\leq C_0r_0$ thus, following inequality \eqref{bmd07} and Lemma~\ref{choicec}, we can control the eigenvalues after rescaling. The control of the perimeter, will be done by a suitable choice of the position of the strips. Contrary to the construction in \cite{mp}, the new perimeter introduced by sectioning with hyperplanes does not depend on the $\H^{N-2}$ measure of the boundary of the sections.

Let  $l_0>0$ and  $n \in \N$. The value of $l_0$ will be precised below, in Lemma \ref{perbound}. Assume $x^i\in \R^N$ and $L_i>2l_0$,  $i=1,\dots, n$ are such that  $S_{2L_i}(x_1^i)\cap S_{2L_j}(x_1^j)=\emptyset$ if $i\not=j$.
 For every $t\in [0,l_0]$ we  define:\[
S(t):=\cup_{i=1}^n{S_{L_i-t}(x_1^i)}.
\]
For an open set of unit measure $\tilde \Om$, we denote $m(t):=|S(t)\cap \tilde \Om|$ the mass of the union of strips in $\tilde \Om$ and \[ \sigma(t):=\sum_{i=1}^n{\mathcal{H}^{N-1}(\tilde \Om\cap \{L_i-t,L_i+t\}\times \R^{N-1})},
\] 
the new perimeter introduced by the sections with the hyperplanes and\[
p(t)=\sum_{i=1}^n{\p(\tilde \Om\cap(L_i-t,L_i+t)\times \R^{N-1})}-\sigma(t),\] 
the perimeter of  $\tilde \Om$ inside the strips.
We denote the rescaled set, \[
\Om(t):=(1-m(t))^{-1/N}(\tilde \Om\setminus S(t)).
\]
 
\begin{lemma}\label{perbound}
Given $P>0$ and an open set $\tilde \Om$ of unit measure, with $\p(\tilde \Om)\leq P$,
there exist two constants $l_0$ and $\widehat m$, depending \emph{only} on $P$ and the dimension $N$, such that if $m(l_0)\leq \widehat m$ then there exists $t\in[0,l_0]$ such that  $\p (\Om(t))\leq \p(\tilde \Om)$.
\end{lemma}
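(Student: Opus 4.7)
My plan is to reduce the perimeter inequality $\p(\Om(t))\le \p(\tilde\Om)$ to a pointwise condition on the scalar function $m(t)$, and then produce a suitable $t^*$ by a Chebyshev argument based on the identity $m'(t)=-\sigma(t)$. Starting from the standard decomposition $\p(\tilde\Om\setminus S(t))=\p(\tilde\Om)-p(t)+\sigma(t)$ (valid for a.e.~$t$, since $\partial^*\tilde\Om$ and $\partial S(t)$ meet in an $\H^{N-1}$-null slice off a countable exceptional set), together with the rescaling $\p(\Om(t))=(1-m(t))^{-(N-1)/N}\p(\tilde\Om\setminus S(t))$, the desired inequality rewrites as
\[
p(t)-\sigma(t)\;\ge\;\bigl[1-(1-m(t))^{(N-1)/N}\bigr]\,\p(\tilde\Om).
\]
Since $f(m):=1-(1-m)^{(N-1)/N}$ is convex on $[0,1]$ with $f(0)=0$ and $f(1)=1$, it lies below its chord $y=m$, so it suffices to produce $t$ with $p(t)-\sigma(t)\ge m(t)\,P$.

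To get this I will invoke the Euclidean isoperimetric inequality on $\tilde\Om\cap S(t)$: $p(t)+\sigma(t)\ge c_N\,m(t)^{(N-1)/N}$ with $c_N:=N\omega_N^{1/N}$. Subtracting $2\sigma(t)$ yields $p(t)-\sigma(t)\ge c_N m(t)^{(N-1)/N}-2\sigma(t)$, and setting $m_0:=(c_N/(2P))^N$, for $m(t)\le m_0$ one has $c_N m(t)^{(N-1)/N}\ge 2\,m(t)\,P$, so the sufficient condition collapses to
\[
\sigma(t)\le \tfrac{c_N}{4}\,m(t)^{(N-1)/N},\qquad\text{i.e.,}\qquad -\bigl(m^{1/N}\bigr)'(t)\le \tfrac{c_N}{4N},
\]
using $m'=-\sigma$ a.e.

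Taking $\widehat m:=m_0/2$ and $\tau_0:=\inf\{t:m(t)\le m_0\}$, the hypothesis $m(l_0)\le \widehat m$ forces $\tau_0\in[0,l_0)$, and on $[\tau_0,l_0]$ one has $\int_{\tau_0}^{l_0}-\bigl(m^{1/N}\bigr)'\,dt\le m_0^{1/N}=c_N/(2P)$. By Chebyshev the bad set where $-(m^{1/N})'>c_N/(4N)$ has measure at most $2N/P$, so a good $t^*$ exists as soon as $l_0-\tau_0>2N/P$; choosing $l_0$ a sufficiently large multiple of $N/P$ handles the benign case $\tau_0\le l_0/2$. The \emph{main obstacle} I foresee is the residual case $\tau_0>l_0-2N/P$, in which $m$ stays above $m_0$ on almost all of $[0,l_0]$ and drops below $\widehat m$ only abruptly at the end; here one has to combine the global integral bound $\int_0^{l_0}\sigma\,dt=m(0)-m(l_0)\le 1$ with the isoperimetric input in the intermediate range $m_0<m\le 1$, through a De~Giorgi-type dyadic iteration across the level sets $\{m\le m_0\cdot 2^{-k}\}$, and it is this iteration that will dictate the precise dependence of $\widehat m$ and $l_0$ on $P$ and $N$.
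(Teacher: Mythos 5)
Your perimeter decomposition $\p(\tilde\Om\setminus S(t))=\p(\tilde\Om)-p(t)+\sigma(t)$, the rescaling formula, the elementary pointwise bound on $(1-m)^{(N-1)/N}$, the Euclidean isoperimetric inequality applied to $\tilde\Om\cap S(t)$, and the identity $m'=-\sigma$ are exactly the paper's ingredients, and your reduction to a bound on $-(m^{1/N})'$ is the same as theirs. The only presentational difference is that the paper argues by contradiction --- it assumes $\p(\Om(t))>\p(\tilde\Om)$ for \emph{every} $t$, derives $-m'(t)\ge c\,m(t)^{(N-1)/N}$, and integrates over all of $[0,l_0]$ --- whereas you select a good $t$ directly by Chebyshev. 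These are the same argument, in contrapositive.

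The ``residual case'' $\tau_0>l_0-2N/P$ that you flag as the main obstacle does not actually arise in the paper, because the hypothesis the authors \emph{use} is $m(0)\le\widehat m$ (equivalently $m(t)\le\widehat m$ for all $t$, since $m$ is nonincreasing), not the $m(l_0)\le\widehat m$ written in the lemma statement. Their differential inequality requires the lower bound on $(1-m(t))^{(N-1)/N}$ at every $t\in[0,l_0]$ before integrating, and they close with ``Since $m(0)=\widehat m$ and $m(l_0)\ge0\ldots$'', which only makes sense with the smallness assumption placed on $m(0)$. Under that hypothesis your $\tau_0=0$, your Chebyshev estimate finishes the proof once $l_0\gtrsim N/P$, and the dyadic De~Giorgi iteration you sketch is unnecessary. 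You are also right to be uneasy about closing the argument under $m(l_0)\le\widehat m$ alone: for $m(t)$ above the threshold $m_0=\bigl(N\omega_N^{1/N}/(2P)\bigr)^N$ the isoperimetric term $N\omega_N^{1/N}m(t)^{(N-1)/N}$ no longer dominates $m(t)\,P$, so the dyadic scheme stalls after a single level and provides no further leverage. In short, you correctly located the weak point, but it sits in the wording of the hypothesis rather than in your method; read against the hypothesis the paper's proof actually uses, your argument is complete.
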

\begin{proof}
First of all, we notice that, by definition, $t\mapsto m(t)$ is a nonincreasing function and for a.e. $t\in (0,l_0)$, we have that $\sigma(t)=-m'(t)$.
If for every $t\in[0,l_0]$ we would have $\p(\Om(t))> \p(\tilde \Om)$, we get:\[
\p (\tilde \Om)-p(t)+\sigma(t)\geq \p (\tilde \Om)(1-m(t))^{\frac{N-1}{N}}.
\]
There exists a constant $\widehat m$ (depending only on $P,N$), such that if $m(t)\leq \widehat m$, then
 \[
(1-m(t))^{\frac{N-1}{N}}\geq 1-\frac{m(t)^{\frac{N-1}{N}}}{2P}\geq 1-\frac{m(t)^{\frac{N-1}{N}}}{2\p(\tilde \Om)}.
\]
Putting the above inequalities together and using  the isoperimetric inequality for the set $S(t)$,\[
\p(\tilde \Om)+2\sigma(t)\geq \p(\tilde \Om)-\frac{m(t)^{\frac{N-1}{N}}}{2}+p(t)+\sigma(t)\geq \p(\tilde \Om)-\frac{m(t)^{\frac{N-1}{N}}}{2}+N\omega_N^{1/N}m(t)^{\frac{N-1}{N}}.
\] 
Since  $2N\omega_N^{1/N}- 1>0$, we obtain:\[
-m'(t)\geq (2N\omega_N^{1/N}-1)\frac{m(t)^{\frac{N-1}{N}}}{4}.
\]
By integrating on $[0, l_0]$ we get
$$ m^{1/N}(0)- m^{1/N}(l_0)\ge (2\omega_N^{1/N}-1)\frac{l_0}{4N}.$$
Since $m(0)= \widehat m$ and $m(l_0)\ge 0$, choosing $l_0 > \frac{4N}{2\omega_N^{1/N}-1} \widehat m^{1/N}$ we get a contradiction.
\end{proof}

\begin{remark}\label{mtilde}
If we denote by $A$ a subset of $\tilde \Om$ with $\max_A{w_{\tilde \Om}}\leq C_0r_0$ then, having in mind~\eqref{vdb}, if 
$\widehat m$ is small enough (depending only on $C_0$ and $r_0$) we get
 \[
\la_1( A)(1-\widehat m)^{2/N}\geq \frac{1}{2C_0r_0}\geq K. 
\]
\end{remark}
\begin{remark}\label{positiveenergy}
Thanks to the choice of $C_0,r_0$ made in Lemma~\ref{caffarelli}, we deduce that if $A\subset \tilde \Om$ is such that  $\max_{A}{w_{\tilde \Om}}\leq C_0r_0$, then $E(A)+c|A|\geq 0$.
Indeed, using the monotonicity of the torsion function: \[
E(A)+c|A|=-\frac12\int{w_A}dx+c|A|\geq-\frac12\int_{A}{w_{\tilde \Om}}dx+c|A|\geq -\frac{C_0r_0|A|}{2}+c|A|\geq 0,
\]
since $C_0r_0\leq 2c$ from the hypotheses of Lemma~\ref{caffarelli}.
\end{remark}

We are now in position to prove the main result of this section. 
\begin{proof}[Proof of Theorem \ref{main}] We fix the constant $c$ such that Lemma~\ref{choicec} is satisfied, we get $C_0,r_0$ from Lemma~\ref{caffarelli} and we fix a constant $\widehat m$ that works both for Lemma~\ref{perbound} and for Remark~\ref{mtilde}. 
For simplicity we rename $w=w_{\tilde \Om}$. 
The region where  $w(x)\geq C_0r_0$    is contained in a finite union of  strips with width $4r_0$. Indeed, we define \[
X_0:=\left\{x_1\in \R\;:\;\max_{S_{2r_0}(x_1)}{w}\geq C_0r_0\right\}, \qquad \widetilde X:=\bigcup\Big\{S_{2r_0}(t) : t\in X_0\Big\}.
\]
From  Lemma~\ref{density} and the Saint Venant inequality~\eqref{saintvenant} the set $\widetilde X$ is contained  in the union of at most $n=n(r_0,N)$ of disjoint strips (each of width at least $4r_0$). Let us call $X$ the projection of $\widetilde X$ on $\R e_1$.

The set $\R\setminus  X$ is a  finite  union of disjoint segments and of the infinite intervals at $\pm\infty$, say 
\[
\R\setminus X=(-\infty,b_0)\cup\left[ \bigcup_{i=1}^{n}{(a_i,b_i)}\right]\cup (a_{n+1},\infty).
\]
If a segment $(a_i,b_i)$ has a length less than  or equal to $8r_0+2l_0$, we shall ignore it in our further construction and just add the corresponding strip to the set $\tilde X$ and renumber the index $i$ if necessary. The total length of  those such segments is at most  $ n(8r_0+2l_0)$. 

Therefore, we shall assume in the sequel that all segments $(a_i,b_i)$ have a length 
 greater than  $8r_0+2l_0$.
We denote $\overline a_i= a_i + (4r_0+l_0)$, $\overline b_i= b_i -( 4r_0+l_0)$ and
 $$Y=  \left[\bigcup_{i=1}^{n+1}(a_i, \overline a_i)\right]\cup\left[ \bigcup_{i=0}^{n}(\overline b_i,  b_i)\right].$$

In order to highlight the main idea, let us assume in a first instance that
\begin{equation}\label{masssmall}
| \left ( Y \times \R^{N-1} \right ) \cap \tilde \Om | \leq \widehat m.
\end{equation}
If we are in this situation, we perform a simultaneous ``cut'' as in Lemma~\ref{caffarelli}, removing the following union of strips:\[
S_t:= S_{r_0}(b_0-2r_0-t)\bigcup   S_{r_0}(a_i+2r_0+t)\bigcup S_{r_0}(b_i-2r_0-t)\bigcup S_{r_0}(a_{n+1}+2r_0+t),
\]
for every $t \in [0, l_0]$. 

Following the assumption \eqref{masssmall} and Lemma~\ref{perbound}, there exists a value $t$ such that the perimeter of the rescaled set $|\tilde \Om \setminus \overline S_t|^{- \frac 1N} (\tilde \Om \setminus \overline S_t)$ is at most $\p (\tilde \Om)$. Moreover, from the choice of $c$ and  Lemma~\ref{caffarelli}, all the eigenvalues less than $K$ of the rescaled set are not greater than the ones on $\tilde \Om$. 

In order to handle the diameter of the rescaled set, we replace all the connected components having a projection on $\R e_1$ disjoint from $X$ by one ball, such that the  volume remains unchanged. In this way, the perimeter does not increase, while the low part of the spectrum (below $K$) can only decrease, since the first eigenvalue of every such a connected component is not smaller than $1/(C_0r_0)\geq 2K$ (see Remark \ref{mtilde}). 

It is clear that the new set  satisfies the diameter bound:
\[
\diam_{e_1}(\Om)\leq \diam_{e_1}(\widehat \Om)(1-\widehat m)^{-1/N}\leq 2\Big( \H^1 (X)+n(8r_0+2l_0)+2r_0(n+2)\Big) + 2 \omega_N^{- \frac {1}{N}}.
\]

If assumption \eqref{masssmall} does not hold, we can not apply directly Lemma~\ref{perbound}.
Let $p\in \N$ depending only on $P$ and the dimension, be such that $\frac 1p \le \widehat m < \frac{1}{p-1}$. If
$$a_i +p (4r_0+l_0)> b_i-p (4r_0+l_0),$$
we ignore this strip and add it to $X$, renumbering the index $i$ if necessary. There exists $s \in [0,p-1]$ such that replacing simultaneously all $a_i$ with $a_i + s (4r_0+l_0)$ and $b_i$ with $b_i - s (4r_0+l_0)$ the assumption  \eqref{masssmall} is satisfied
and so we finish the proof, adding at worst $4np (4r_0+l_0)$ to the diameter. 
\end{proof}

\begin{remark}
Since the choice of the direction $e_1$ was arbitrary, we can repeat all the process of the proof of Theorem~\ref{main} for all the coordinate direction, finding a set which has diameter bounded in all directions, unit measure, better eigenvalues than $\tilde \Om$ up to level $k$ and perimeter lower than $\tilde \Om$.
\end{remark}

\noindent{\bf Acknowledgments.}  This work was initiated during the first author stay at the Isaac Newton Institute for Mathematical Sciences Cambridge in the programme ``Free boundary problems and related topics", as an answer to a question raised by Michiel Van den Berg. The first author also acknowledges the support of  ANR--12--BS01--0007 Optiform. The work of the second author has been supported by the ERC Starting Grant n.\ 258685 ``AnOptSetCon''.

\end{document}